\newcolumntype{o}{@{}>{{}}c<{{}}@{}}
\DeclareMathOperator{\Hom}{Hom}
\newcommand{\im}{\mathrm{im}}
\newcommand{\Ann}{\mathrm{Ann}}
\newcommand{\hsl}[2]{\mathrm{HSL}_{#1}(#2)}
\newcommand{\gr}[1]{\mathrm{gr}\left(#1\right)}
\newcommand{\fte}[1]{\mathrm{Fte}\left(#1\right)}
\newcommand{\fm}{\mathfrak{m}}
\newcommand{\fn}{\mathfrak{n}}
\newcommand{\fq}{\mathfrak{q}}
\newcommand{\bbN}{\mathbb{N}}
\newcommand{\bbR}{\mathbb{R}}
\newcommand{\bbZ}{\mathbb{Z}}
\newcommand{\cF}{\mathcal{F}}
\newcommand{\cG}{\mathcal{G}}
\newcommand{\cH}{\mathcal{H}}
\newcommand{\cP}{\mathcal{P}}
\theoremstyle{definition}
\newtheorem*{theorem*}{Theorem}
\newtheorem*{solution*}{Solution}
\newtheorem*{problem*}{Problem}
\newtheorem*{definition*}{Definition}
\newtheorem*{lemma*}{Lemma}
\newtheorem*{corollary*}{Corollary}
\newtheorem*{proposition*}{Proposition}
\newtheorem{theorem}{Theorem}[section]
\newtheorem{definition}[theorem]{Definition}
\newtheorem{remark}[theorem]{Remark}
\newtheorem{example}[theorem]{Example}
\newtheorem{proposition}[theorem]{Proposition}
\newtheorem{corollary}[theorem]{Corollary}
\newtheorem{lemma}[theorem]{Lemma}
\newtheorem{notation}[theorem]{Notation}
\newcommand{\ceil}[1]{\lceil #1\rceil}
\newcommand{\nq}[1]{N_Q^{(#1)}}
\author{Havi Ellers}
\title{A bound on the Hartshorne-Speiser-Lyubeznik number of semigroup rings}
\date{}
\newcounter{problem}
\begin{document}

\maketitle	
\begin{abstract}
	In this paper we prove an explicit, computable upper bound on the Hartshorne-Speiser-Lyubeznik number of the local cohomology of a pointed, affine semigroup ring over a perfect field of positive characteristic. This bound depends only on the characteristic of the ring and properties of the semigroup. 
\end{abstract}

\section{Introduction}\label{intro}
\thispagestyle{fancy}
\fancyhead{}
\fancyfoot[LO,RE]{\noindent\rule{2cm}{0.4pt}\\ The author was supported by NSF grants DMS 2200501, DMS 2101075 and DMS 1840234.}

The Hartshorne-Speiser-Lyubeznik number (HSL number) is a numerical invariant of modules with a Frobenius action. Roughly speaking, it is a degree of nilpotency for the Frobenius action on the module. One important example of a class of modules with a Frobenius action is the class of local cohomology modules of a ring of positive characteristic. If the ring is local, we can define the Hartshorne-Speiser-Lyubeznik number (HSL number) of the ring to be the largest HSL number of any of its local cohomology modules with support at the maximal ideal. Then the Hartshorne-Speiser-Lyubeznik number can be seen as a singularity invariant of the ring, and can be connected to various types of $F$-singularities. See \cite{hq19,mad19,Quy19,PQ19,hq21,kmps,maddoxPandey,st17} for several of these connections. In particular, a local ring of positive characteristic is $F$-injective if and only if it has zero HSL number. 

Let $R$ be a commutative Noetherian ring of characteristic $p>0$. A \textit{Frobenius action} on an $R$-module $M$ is an additive map $\rho:M\to M$ such that $\rho(rm)=r^p\rho(m)$ for all $r\in R,m\in M$. Given any $R$-module $M$ with Frobenius action $\rho:M\to M$, we can define the \textit{nilpotent submodule} of $M$ to be
\begin{align*}
	0^\rho_M &= \{m\in M\,|\,\rho^e(m)=0\text{ for some }e\in\bbN\}.
\end{align*}

The Hartshorne-Speiser-Lyubeznik number of $M$ is then defined as follows.

\begin{definition}
	Let $M$ be an $R$-module with Frobenius action $\rho:M\to M$. Then the \textit{Hartshorne-Speiser-Lyubeznik number} of $M$ is
	\begin{align*}
		\hsl{}{M} &= \inf\{e\in\bbN\,|\,\rho^e(m)=0\text{ for all }m\in0^\rho_M\}.
	\end{align*} 
\end{definition}

If $M$ is Noetherian it is straightforward to see that $\hsl{}{M}$ is finite. Surprisingly, the HSL number is also finite for Artinian modules:
\begin{theorem}\label{thing9}
	(\cite[Prop 1.11]{hs77}, \cite[Prop 4.4]{lyu97}, \cite[Cor 1.8]{sha06}) If $M$ is an Artinian $R$-module with Frobenius action $\rho:M\to M$, then $\hsl{}{M}<\infty$. That is, there exists an $e\in\bbN$ such that $\rho^e(m)=0$ for all $m\in0^\rho_M$.
\end{theorem}
One important class of Artinian $R$-modules with a Frobenius action is the class of local cohomology modules of a local ring $R$ with support at the maximal ideal, and with Frobenius action induced by the natural Frobenius action on $R$. In this paper, we focus on affine pointed semigroup rings $R$ over a field $k$ of characteristic $p>0$ with maximal monomial ideal $\fm$. We find an explicit upper bound on $\hsl{}{H^\ell_\fm(R)}$ for any $\ell\in\bbN$.

\begin{theorem}\label{thing0}
	Let $Q$ be an affine pointed semigroup of dimension $n$. There exists $N_Q\in\bbN$, depending only on $Q$, such that the following holds. Let $k$ be any perfect field of any characteristic $p>N_Q$ and let $\ell\in\bbN$. Let $\fm$ be the maximal monomial ideal of $k[Q]$. For $m_\cH$ as described in Remark \ref{thing16}, 
	\begin{align*}
		\hsl{}{H^\ell_\fm(k[Q])}\le\max_{\cH}\lceil\log_p m_{\cH}\rceil,
	\end{align*}
	where the maximum ranges over faces $\cH$ of $Q$ with $\dim\cH\ge\ell$.
\end{theorem}

\begin{remark}
	The number $m_\cH$ depends only on $\cH$, and roughly speaking it depends on how far $\cH$ is from its saturation.
\end{remark}

\begin{remark}
	See Definition \ref{thing56} and Remark \ref{thing17} for an explicit description of $N_Q$. 
\end{remark}

\begin{remark}
	Theorem \ref{thing0} can be refined so that $N_Q$ is optimized for each $\ell$. See Theorem \ref{thing00}. 
\end{remark}

\noindent Theorem \ref{thing0} has the following consequence:
\begin{corollary}
	Let $Q$ be an affine pointed semigroup of dimension $n$. Then for any $p\gg0$, if $k$ is any perfect field of characteristic $p$ and $\ell\in\bbN$ then $\hsl{}{H^\ell_\fm(k[Q])}$ is either 0 or 1. That is, either Frobenius is injective on $H^\ell_\fm(k[Q])$ or the nilpotent submodule of $H^\ell_\fm(k[Q])$ is exactly the kernel of Frobenius.
\end{corollary}
Note that if the semigroup $Q$ is saturated then $k[Q]$ is normal (see for example \cite[Prop 7.25]{combca}) and strongly $F$-regular (see \cite[Example 4.20]{fbook}) and therefore Frobenius acts injectively on local cohomology and $\hsl{}{H^\ell_\fm(k[Q])}=0$ for all indices $\ell$. However, the converse is false: there are non-saturated semigroups $Q$ such that $\hsl{}{H^\ell_\fm(k[Q])}=0$ for all indices $\ell$ and all $p\gg0$. For example, using Theorem \ref{thing0} the reader can easily verify the following example.

\begin{example}
	Let $R=k[x^2,x^2y,xy,xy^2,y^2]$ and $Q$ be the associated semigroup in $\bbZ^2$. Then $x=\frac{x^2y}{xy}$ is in the saturation of $Q$ but not in $Q$, hence $Q$ is not saturated. However, if $p>2$ then $\hsl{}{H^1_\fm(k[Q])}=\hsl{}{H^2_\fm(k[Q])}=0$.
\end{example}

In Section \ref{fte} we additionally see how the results of this paper can be used to obtain bounds on the Frobenius test exponent of any weakly $F$-nilpotent or Cohen-Macaulay affine pointed semigroup ring. See Corollary \ref{thing8} and Corollary \ref{thing11}.

The organization of the paper is as follows. In Section \ref{background} we give background on semigroup rings, local cohomology, and HSL numbers. In Section \ref{prelimresul} we give some preliminary results and in Section \ref{mainthm} we give the proof of the main theorem. In Section \ref{fte} we describe applications to bounds for Frobenius test exponents.

\section{Background}\label{background}

\subsection{Semigroup Rings}

A \textit{semigroup} is a set $Q$ together with a binary operation $+$ that satisfies the associative property: for all $u,v,w\in Q$ we have $(u+v)+w=u+(v+w)$. A semigroup is \textit{affine} if it is finitely generated and is isomorphic to a sub-semigroup of the free abelian group $\bbZ^d$ \cite[page 50]{bg09}. We always assume that our semigroups are affine. 

Given a semigroup $Q$, for each field $k$ we can form the corresponding \textit{semigroup ring} $k[Q]$, which is the $k$-algebra with $k$-basis $\{\underline{x}^v\,|\,v\in Q\}$ and multiplication defined by $\underline{x}^v\cdot\underline{x}^w=\underline{x}^{v+w}$.  Note that $k[Q]$ has a natural $Q$-grading. If $Q$ is affine then $k[Q]$ is finitely generated as a $k$-algebra.

The \textit{maximal monomial ideal} of $k[Q]$ is the ideal generated by all non-unit monomials. A semigroup is \textit{pointed} if it has no non-identity units. The maximal monomial ideal of $k[Q]$ is a maximal ideal if and only if $Q$ is pointed (see \cite[Exercise 20.15]{24hrs}). When $Q$ is pointed we will also refer to $k[Q]$ as a \textit{pointed} semigroup ring.

Let $M$ be a lattice, i.e.\ a finitely generated torsion-free abelian group. Let $M^\vee=\Hom_\bbZ(M,\bbZ)$ denote the dual lattice of $M$, which is itself a lattice. Let $M_\bbR$ denote the $\bbR$-vector space $M\otimes_\bbZ\bbR$. We denote by $M_\bbR^*$ the usual vector space dual. A \textit{rational convex polyhedral cone} in $M_\bbR$ is a set
\begin{align*}
	\sigma &= \{r_1v_1+\cdots+r_sv_s\in M_\bbR\,|\,r_i\ge0\}
\end{align*} 
generated by a finite set of vectors $v_1,\dots,v_s\in M$. The \textit{dual} of $\sigma$ is
\begin{align*}
	\sigma^\vee &= \{u\in M_\bbR^*\,|\,u(v)\ge0\text{ for all }v\in\sigma\}.
\end{align*}
Note that the dual of a rational convex polyhedral cone in $M_\bbR$ is a rational convex polyhedral cone in $M_\bbR^*$. The \textit{dimension} of a cone is the dimension of the $\bbR$-vector space $\bbR\cdot\sigma=\sigma+(-\sigma)$.

Given a lattice $M$ and a rational convex polyhedral cone $\sigma$ in $M_\bbR$, a \textit{face} $\tau$ of $\sigma$ is $\{v\in\sigma\,|\,u(v)=0\}$ for some $u\in\sigma^\vee$. A face is itself a rational convex polyhedral cone, and the \textit{dimension} of a face is its dimension as a cone. A \textit{facet} is a face of codimension 1.

If $\sigma$ spans $M_\bbR$ and $\tau$ is a facet of $\sigma$ then there is a $u\in\sigma^\vee$, unique up to scalar multiplication, such that $\tau=\{v\in M_\bbR\,|\,u(v)=0\}$. We denote such a vector by $u_\tau$. Then we have the following proposition.
\begin{proposition}(See \cite[page 11]{fulton}.)
	Let $\sigma$ be a rational convex polyhedral cone in $M_\bbR$. If $\sigma$ spans $M_\bbR$ and $\sigma\not=M_\bbR$, then $\sigma$ is the intersection of the half-spaces $H_\tau=\{v\in M_\bbR\,|\,u_\tau(v)\ge0\}$ as $\tau$ ranges over the facets of $\sigma$.
\end{proposition}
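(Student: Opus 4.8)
Write $\sigma'=\bigcap_{\tau}H_\tau$ for the intersection over the facets $\tau$ of $\sigma$ (a polyhedral cone has only finitely many faces, so this is a finite intersection). The plan is to prove the two inclusions $\sigma\subseteq\sigma'$ and $\sigma'\subseteq\sigma$ separately.

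The inclusion $\sigma\subseteq\sigma'$ is immediate from the way $u_\tau$ was chosen in the paragraph preceding the proposition: for each facet $\tau$, the functional $u_\tau$ is a supporting linear functional of $\tau$ normalized so that $u_\tau\in\sigma^\vee$, hence $u_\tau(v)\ge 0$ for every $v\in\sigma$, i.e.\ $v\in H_\tau$. Intersecting over all facets gives $\sigma\subseteq\sigma'$.

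For the reverse inclusion I would argue by contraposition: given $v_0\notin\sigma$, it suffices to produce a single facet $\tau$ with $u_\tau(v_0)<0$, since then $v_0\notin H_\tau$ and so $v_0\notin\sigma'$. Since $\sigma$ is finitely generated it is closed, so the separating-hyperplane theorem applies to the point $v_0$ and the closed convex set $\sigma$; using in addition that $\sigma$ is a cone containing the origin, one upgrades the separating functional to a nonzero $\ell\in M_\bbR^*$ with $\ell(v_0)<0$ and $\ell(v)\ge 0$ for all $v\in\sigma$, i.e.\ $\ell\in\sigma^\vee$. The remaining task is to replace $\ell$ by one of the $u_\tau$. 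For this I would invoke the structure theory of $\sigma^\vee$: it is again a rational polyhedral cone (Minkowski--Weyl), it is pointed because $\sigma$ spans $M_\bbR$, and its extreme rays are exactly the half-lines $\bbR_{\ge 0}u_\tau$ as $\tau$ ranges over the facets of $\sigma$ (the inclusion-reversing bijection between faces of $\sigma$ and faces of $\sigma^\vee$, which sends facets to rays, together with the uniqueness of $u_\tau$ up to positive scalar). A pointed polyhedral cone is the conical hull of its extreme rays, so $\ell=\sum_{\tau}a_\tau u_\tau$ with all $a_\tau\ge 0$; evaluating at $v_0$ gives $\sum_{\tau}a_\tau u_\tau(v_0)=\ell(v_0)<0$, which forces $u_\tau(v_0)<0$ for at least one facet $\tau$, as needed.

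The entire content sits in this last step, and that is the step I expect to be the main obstacle: the claim that the separating functional $\ell$ can be taken to be supported on a facet rather than on a face of smaller dimension. Making this precise rests on the nontrivial half of Minkowski--Weyl (that $\sigma^\vee$, defined as a finite intersection of half-spaces, is finitely generated) together with the face-duality between $\sigma$ and $\sigma^\vee$; everything else is bookkeeping. If one preferred not to cite these, an alternative is to prove the reverse inclusion directly by induction on $\dim\sigma$: among all $\ell\in\sigma^\vee$ with $\ell(v_0)<0$, choose one for which the face $\sigma\cap\ell^{\perp}$ has maximal dimension, and then show, by a perturbation argument carried out inside $\sigma^\vee$, that this face is necessarily a facet. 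In either formulation the two-inclusion skeleton above is the plan, and the only genuinely substantive ingredient is the facet/extreme-ray identification.
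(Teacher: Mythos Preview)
The paper does not give its own proof of this proposition: it is stated as a quotation from Fulton (the parenthetical ``See \cite[page 11]{fulton}'' is the entire justification), and the paper immediately moves on to use it. So there is nothing in the paper to compare your argument against.

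That said, your argument is correct and is essentially the standard proof one finds in the toric-geometry literature. The easy inclusion $\sigma\subseteq\sigma'$ is exactly as you say. For the hard inclusion, your reduction to ``any $\ell\in\sigma^\vee$ is a nonnegative combination of the $u_\tau$'' is the heart of the matter, and you have correctly identified the ingredients: $\sigma^\vee$ is polyhedral (Farkas/Minkowski--Weyl), it is pointed because $\sigma$ spans $M_\bbR$, and its extreme rays are spanned by the $u_\tau$ via the order-reversing face correspondence. The hypothesis $\sigma\neq M_\bbR$ guarantees that $\sigma^\vee\neq\{0\}$, so facets actually exist and the decomposition of $\ell$ is nonvacuous. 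Your alternative inductive/perturbation route is also standard and would work; either way, the substantive content is exactly the facet--ray duality you flag.
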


Given a semigroup $Q$, let $\gr{Q}$ denote the group generated by $Q$ (see \cite[page 50]{bg09} for a construction of $\gr{Q}$; if $Q$ is a sub-semigroup of $\bbZ^d$ then $\gr{Q}$ can be identified with $\bbZ Q\subset\bbZ^d$). Then $\gr{Q}$ is a lattice. If $M=\gr{Q}$ and $\sigma$ is the cone generated in $M_\bbR$ by $Q$, we can, similarly to above, define a \textit{face} of $Q$ to be the intersection of $Q$ with a face of $\sigma$, and a \textit{facet} of $Q$ to be the intersection of $Q$ with a facet of $\sigma$. The \textit{dimension} of a face of $Q$ is the dimension of the corresponding face of $\sigma$. We will also refer to a face of $Q$ of dimension $i$ as an $i$-face. A \textit{ray} is a 1-face. We further define the \textit{saturation} of $Q$ to be
\begin{align*}
	Q_{\mathrm{sat}}=\sigma\cap M.
\end{align*} 
Note that $Q_{\mathrm{sat}}$ is also a semigroup, and that $k[Q_{\mathrm{sat}}]$ is the normalization of $k[Q]$ (see \cite[Prop 7.25]{combca}).

\subsection{Local Cohomology and HSL Numbers}\label{thing12}

For a commutative ring $R$ with ideal $I\subset R$, the \textit{$i^{\mathrm{th}}$ local cohomolgy module} of $R$ at $I$ is the $i^{\mathrm{th}}$ right derived functor of 
\begin{align*}
	H^0_I(R)=\{x\in R\,|\,I^Nx=0\text{ for some }N\in\bbN\}
\end{align*}
in the category of $R$-modules. We consider $R=k[Q]$ a pointed semigroup ring and $I=\fm$ the maximal monomial ideal of $R$. In this case, the local cohomology of $R$ at $\fm$ can be computed using a combinatorial complex called the Ishida complex (see \cite{ish88}). 

We use the following notation from \cite{24hrs}: for a face $\cF$ of $Q$, we write $k[Q]_\cF$ for the localization of $k[Q]$ at the set of monomials $\underline{x}^v$ for $v\in \cF$. Note that $k[Q]_\cF$ is equivalently the semigroup ring generated by the semigroup $$Q-\cF=\{u-v\,|\,u\in Q,v\in\cF\}.$$ The \textit{Ishida complex} $\mho^\bullet_Q$ of the semigroup $Q$ is the complex
\begin{align}
	0 \to k[Q]\to\bigoplus_{\text{rays }\cF}k[Q]_\cF\to\cdots\to\bigoplus_{i\text{-faces }\cF}k[Q]_\cF\xrightarrow{\delta^i}\cdots\to\bigoplus_{\text{facets }\cF}k[Q]_\cF\to k[M]\to0, \label{thing6}
\end{align}
where $M$ is the group generated by $Q$.

The differential $\delta$ is described via componentwise maps $\delta_{\cF,\cG}:k[Q]_\cF\to k[Q]_\cG$, for $\cF$ an $i$-face and $\cG$ an $(i+1)$-face. If $\cF\not\subset\cG$ then $\delta_{\cF,\cG}$ is the zero map. Otherwise $\delta_{\cF,\cG}$ is, up to a sign, the natural localization $k[Q]_\cF\to k[Q]_\cG$. The signs will not be relevant in this paper, but are derived as in the algebraic cochain complex for the polytope $\cP$ obtained as a transverse hyperplane section of $\sigma$. See \cite[page 208]{24hrs}, \cite[Def 13.21]{combca}, \cite[Section 2]{moy22} and \cite{ish88} for more extensive descriptions. Note that the $H^i_\fm(k[Q])$ are all $M$-graded, since the $k[Q]_\cF$ are $M$-graded and the differential $\delta$ preserves that grading.

There is a natural Frobenius action on the complex \eqref{thing6}, given by 
\begin{align*}
	\bigoplus_{i\text{-faces }\cF}k[Q]_\cF&\to\bigoplus_{i\text{-faces }\cF}k[Q]_\cF\\
	\bigoplus_{i\text{-faces }\cF}\frac{f_\cF}{x^{w_\cF}}&\mapsto \bigoplus_{i\text{-faces }\cF}\frac{f_\cF^p}{x^{pw_\cF}}
\end{align*}
and this induces a corresponding Frobenius action on local cohomology. We will use this description of the Frobenius action on $H^i_\fm(k[Q])$ to study their HSL numbers.

\section{Preliminary Results}\label{prelimresul}

We are interested in semigroup rings $k[Q]$ where $k$ is a perfect field of positive characteristic and $Q$ is a pointed semigroup containing identity. We will use the following notation.

\begin{notation}\label{thing101} Let $Q$ be an affine pointed semigroup, and let $M=\gr{Q}\cong\bbZ^n$ be the group generated by $Q$. Let $M_\bbR$ be the $\bbR$-vector space $M\otimes_\bbZ\bbR$. Let $\sigma$ be the cone generated by $Q$ in $M_\bbR$; note that $\dim(\sigma)=n$, i.e.\ $\sigma$ spans $M_\bbR$. Let $Q_{\mathrm{sat}}:=\sigma\cap M$ be the saturation of $Q$. Let $\sigma_1,\dots,\sigma_r$ denote the facets of $\sigma$, let $u_1,\dots,u_r\in\sigma^\vee$ be such that $\sigma_i=\{v\in\sigma\,|\,u_i(v)=0\}$, and write
	\begin{align*}
		\sigma &= \bigcap_{i=1}^r\{v\in M_\bbR\,|\,u_i(v)\ge0\}. 
	\end{align*}
	For a face $\cH$ of $Q$, let $\sigma_{\cH}=\bbR_{\ge0}\cH$ denote the face of $\sigma$ generated by $\cH$. Finally, let $\fm$ be the maximal monomial ideal of $k[Q]$.
\end{notation}

\begin{lemma}\label{thing13} 
	(See \cite[Exercise 7.15]{combca}.) Let $Q$ be an affine semigroup. Then $Q$ contains a translate of its saturation: $\gamma_Q+Q_{\mathrm{sat}}\subset Q$ for some $\gamma_Q\in Q$.
\end{lemma}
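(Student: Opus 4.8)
The plan is to exploit that the saturation $Q_{\mathrm{sat}}=\sigma\cap M$ (here $M=\gr{Q}\cong\bbZ^n$ and $\sigma$ is the cone generated by $Q$) is \emph{finite over} $Q$, meaning there are finitely many $g_1,\dots,g_m\in Q_{\mathrm{sat}}$ such that every $h\in Q_{\mathrm{sat}}$ satisfies $h-g_i\in Q$ for some $i$. Granting this, here is how I would build $\gamma_Q$. Because $Q$ is abelian, cancellative and torsion-free it embeds in its group $M$, so each $g_i$ can be written as $g_i=a_i-b_i$ with $a_i,b_i\in Q$. Put $\gamma_Q:=\sum_{i=1}^m b_i\in Q$. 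Given $h\in Q_{\mathrm{sat}}$, choose $i$ with $h-g_i\in Q$; then
\[
	\gamma_Q+h=(h-g_i)+(g_i+b_i)+\sum_{j\neq i}b_j=(h-g_i)+a_i+\sum_{j\neq i}b_j,
\]
which is a sum of elements of $Q$, hence lies in $Q$ (using that $Q$ is closed under addition and contains an identity). Thus $\gamma_Q+Q_{\mathrm{sat}}\subseteq Q$.

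To obtain the finite set $\{g_1,\dots,g_m\}$ I would invoke that $k[Q_{\mathrm{sat}}]$ is a module-finite extension of $k[Q]$. Indeed $k[Q]$ is a finitely generated $k$-algebra (generated by the monomials of a finite generating set of $Q$) and a domain, since it embeds into $k[M]\cong k[x_1^{\pm1},\dots,x_n^{\pm1}]$; hence $k[Q]$ is Noetherian with module-finite normalization $\overline{k[Q]}$. Moreover every $h\in Q_{\mathrm{sat}}$ lies in the rational cone $\sigma$, which is generated by finitely many elements of $Q$, so writing $h$ as a nonnegative rational combination of those generators and clearing denominators yields $Nh\in Q$ for some $N\ge1$; thus $\underline{x}^h$ is integral over $k[Q]$, and therefore $k[Q_{\mathrm{sat}}]\subseteq\overline{k[Q]}$ is a finite $k[Q]$-module (this is also \cite[Proposition 7.25]{combca}). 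Since the extension is $M$-graded, a finite generating set may be chosen homogeneous, i.e.\ of the form $\underline{x}^{g_1},\dots,\underline{x}^{g_m}$ with $g_i\in Q_{\mathrm{sat}}$; comparing degree-$h$ components in an expansion $\underline{x}^h=\sum_i f_i\underline{x}^{g_i}$ with $f_i\in k[Q]$ forces $h-g_i\in Q$ for some $i$, which is exactly the reduction used above. A more hands-on alternative avoiding ring theory is to take Gordan generators $h_1,\dots,h_s$ of $Q_{\mathrm{sat}}$, choose $N$ with $Nh_j\in Q$ for every $j$, and reduce the coefficients of an arbitrary element of $Q_{\mathrm{sat}}$ modulo $N$, so that it becomes an element of $Q$ plus one of the finitely many ``remainders'' $\sum_j t_jh_j$ with $0\le t_j<N$.

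I do not anticipate a real obstacle --- this is a standard fact --- but the point that needs attention is uniformity: the corrector $b_i$ depends on $h$, so one cannot take $\gamma_Q=b_i$; the fix is to sum the $b_i$ over \emph{all} of the finitely many coset representatives, which is legitimate precisely because $Q$ is closed under addition. One should also be a little careful with hypotheses --- torsion-freeness, cancellativity and finite generation are what make $\gr{Q}$ a lattice and $k[Q]$ a Noetherian domain --- although for a general affine semigroup the argument goes through with $\gr{Q}$ merely a finitely generated abelian group.
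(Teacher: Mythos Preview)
Your argument is correct and follows essentially the same route as the paper: both use module-finiteness of $k[Q_{\mathrm{sat}}]$ over $k[Q]$ to obtain finitely many monomial generators $\underline{x}^{g_i}$, express each $g_i$ as a difference of elements of $Q$ (the paper phrases this as writing $\underline{x}^{g_i}=f_i/h_i$ in $\mathrm{frac}(k[Q])$ and reducing to monomials, which amounts to the same thing), and take $\gamma_Q$ to be the sum of the ``denominators''. Your version is arguably a bit more transparent in that you make the step ``$h-g_i\in Q$ for some $i$'' explicit via the grading, whereas the paper leaves the final containment $\underline{x}^{\gamma_Q}k[Q_{\mathrm{sat}}]\subset k[Q]$ as a one-line assertion; and your Gordan-lemma alternative is a pleasant bonus the paper does not mention.
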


\begin{proof} By \cite[Prop 7.25 and Cor 13.13]{combca}, $k[Q]\hookrightarrow k[Q_{\mathrm{sat}}]$ is a module-finite extension. Say $k[Q_{\mathrm{sat}}]$ is generated over $k[Q]$ by $g_1,\dots,g_m$. Without loss of generality we may assume that the $g_i$ are monomials, i.e.\ $g_i=\underline{x}^{a_i}$ for some $a_i\in Q_{\mathrm{sat}}$. Since $\mathrm{frac}(k[Q])=\mathrm{frac}(k[Q_{\mathrm{sat}}])$ we can also write
	\begin{align}\label{thing5}
		g_i=\frac{f_i}{h_i}
	\end{align}
	for some $f_i,h_i\in k[Q]$. We claim that we may assume $f_i,h_i$ are also monomials. Indeed, write
	\begin{align*}
		h_i &= \sum_{j=1}^t b_{ij}\underline{x}^{c_{ij}}\\
		f_i &= \sum_{\ell=1}^s d_{i\ell}\underline{x}^{e_{i\ell}}
	\end{align*}
	for some $b_{ij},d_{i\ell}\in k,c_{ij},d_{i\ell}\in Q$. Rearranging \eqref{thing5} we get
	\begin{align*}
		f_i &= h_ig_i, 
	\end{align*}
	that is,
	\begin{align*}
		\sum_{\ell=1}^s d_{i\ell}\underline{x}^{e_{i\ell}} &= \sum_{j=1}^t b_{ij}\underline{x}^{a_i+c_{ij}}.
	\end{align*}
	Hence without loss of generality we may assume $s=t$ and $d_{i\ell}\underline{x}^{e_{i\ell}}=b_{i\ell}\underline{x}^{a_i+c_{i\ell}}$ for all $\ell$. Thus in particular
	\begin{align*}
		g_i &= \underline{x}^{a} = \frac{d_{i1}\underline{x}^{e_{i1}}}{b_{i1}\underline{x}^{c_{i1}}},
	\end{align*}	
	and so we may assume that $f_i,h_i$ are monomials in $k[Q]$ as desired. Now write
	\begin{align*}
		h_i&=b_i\underline{x}^{c_i}
	\end{align*}
	for some $c_i\in Q$. Then $\underline{x}^{c_1+\cdots+c_m}\in k[Q]$ and $\underline{x}^{c_1+\cdots+c_m}\cdot k[Q_{\mathrm{sat}}]\subset k[Q]$. Hence $c_1+\cdots+c_m\in Q$ and $(c_1+\cdots+c_m)+Q_{\mathrm{sat}}\subset Q$ as desired. Setting $\gamma_Q=c_1+\cdots+c_m$ we are done.
\end{proof}

\begin{definition}\label{thing102}
	With notation \ref{thing101}, let $\gamma_Q\in Q$ be such that $\gamma_Q+Q_{\mathrm{sat}}\subset Q$ as in Lemma \ref{thing13}. We then define $m_Q:=\max_{1\le i\le r}\{u_i(\gamma_Q)\}$.
\end{definition}

\begin{lemma}\label{thing1}
	With Notation \ref{thing101} and Definition \ref{thing102}, if $v\in\sigma^{\mathrm{int}}\cap M$ then $mv\in Q$ for $m\ge m_Q$.
\end{lemma}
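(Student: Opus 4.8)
The plan is to combine the translate property of Lemma~\ref{thing13} with the description of $\sigma$ as an intersection of half-spaces. Since $v\in\sigma^{\mathrm{int}}\cap M\subset\sigma\cap M=Q_{\mathrm{sat}}$ and $Q_{\mathrm{sat}}$ is a semigroup, we have $mv\in Q_{\mathrm{sat}}$ for every $m\ge 1$. By Definition~\ref{thing102} and Lemma~\ref{thing13} we have $\gamma_Q+Q_{\mathrm{sat}}\subset Q$, so it suffices to show that $mv-\gamma_Q\in Q_{\mathrm{sat}}=\sigma\cap M$: then $mv=\gamma_Q+(mv-\gamma_Q)\in Q$. As $mv,\gamma_Q\in M$ the membership in $M$ is automatic, so everything reduces to checking $mv-\gamma_Q\in\sigma$, i.e.\ $u_i(mv-\gamma_Q)\ge 0$ for each facet functional $u_i$, $1\le i\le r$.

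Next I would nail down the key numerical inequality. Normalizing each $u_i$ to be a primitive element of $M^\vee=\Hom_\bbZ(M,\bbZ)$ (possible since $\sigma$ is a rational cone, and this is the normalization implicit in Definition~\ref{thing102}), the functional $u_i$ takes integer values on $M$. Because $v$ lies in the interior of $\sigma$, we have $u_i(v)>0$; combined with $u_i(v)\in\bbZ$ this forces $u_i(v)\ge 1$. Hence for $m\ge m_Q$,
\begin{align*}
	u_i(mv-\gamma_Q)=m\,u_i(v)-u_i(\gamma_Q)\ge m-u_i(\gamma_Q)\ge m_Q-u_i(\gamma_Q)\ge 0,
\end{align*}
the last inequality because $m_Q=\max_{1\le j\le r}u_j(\gamma_Q)\ge u_i(\gamma_Q)$.

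This holds for every $i$, so $mv-\gamma_Q\in\bigcap_{i=1}^r\{w\in M_\bbR\,|\,u_i(w)\ge 0\}\cap M=\sigma\cap M=Q_{\mathrm{sat}}$, and therefore $mv=\gamma_Q+(mv-\gamma_Q)\in\gamma_Q+Q_{\mathrm{sat}}\subset Q$, as claimed. There is essentially no obstacle here beyond bookkeeping; the only point that deserves care is the normalization of the $u_i$, since both the statement $u_i(v)\ge 1$ and the definition of $m_Q$ depend on taking the $u_i$ to be primitive integral functionals. Once that convention is fixed the argument is immediate.
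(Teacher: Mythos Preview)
Your argument is correct and is essentially the same as the paper's: both reduce to showing $mv\in\gamma_Q+Q_{\mathrm{sat}}$ by checking $u_i(mv)\ge u_i(\gamma_Q)$ for each facet functional, using that $u_i(v)\ge 1$ for $v\in\sigma^{\mathrm{int}}\cap M$. Your explicit remark about normalizing the $u_i$ to be primitive integral is a welcome clarification that the paper leaves implicit.
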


\begin{proof}
	We can write
	\begin{align*}
		\sigma^{\mathrm{int}}\cap M&= \bigcap_{i=1}^r\{v\in M\,|\,u_i(v)>0\},\\
		\gamma_Q+Q_{\mathrm{sat}}&=\bigcap_{i=1}^r\{v\in M\,|\,u_i(v)\ge u_i(\gamma_Q)\}.
	\end{align*}
	Say $v\in\sigma^{\mathrm{int}}\cap M$. Then for $m\ge m_Q$ and $1\le i\le r$ we have
	\begin{align*}
		u_i(mv)=mu_i(v)\ge u_i(\gamma_Q)u_i(v)\ge u_i(\gamma_Q),
	\end{align*}
	since $u_i(v)\ge1$. Hence $mv\in\gamma_Q+Q_\mathrm{sat}\subset Q$.
\end{proof}

\begin{lemma}\label{thing55}
	Let $\cH$ be a face of $Q$ and let $N\in\bbZ$ be the unique generator  of the annihilator of the finite abelian group
	\begin{align*}
		\frac{\gr{\sigma_\cH\cap M}}{\gr{\sigma_\cH\cap Q}},
	\end{align*}
	or zero if the group is trivial. Fix a prime $p>N$. Then for $v\in M$, if $p^ev\in\gr{\sigma_\cH\cap Q}$ for some $e\in\bbN$ and $v\in\gr{\sigma_\cH\cap M}$, then $v\in\gr{\sigma_\cH\cap Q}=\gr{\cH}$.
\end{lemma}

\begin{example}\label{thing14}
	 Let $Q$ be the semigroup generated in $\bbZ^2$ by 
	\begin{align*}
		v_1 = \begin{bmatrix}
			3\\
			0
		\end{bmatrix}, \qquad v_2=\begin{bmatrix}
			1\\
			1
		\end{bmatrix}, \qquad v_3=\begin{bmatrix}
			0\\
			1
		\end{bmatrix}.
	\end{align*}\\
	\begin{figure}[H]
		\centering
		\begin{tikzpicture}
			\draw (0,0) -- coordinate (x axis mid) (6,0); 
			\draw (0,0) -- coordinate (y axis mid) (0,6); 
			\foreach \x in {0,...,6}
			\draw (\x,1pt) -- (\x,-3pt)
			node[anchor=north] {\x};
			\foreach \y in {0,...,6}
			\draw (1pt,\y) -- (-3pt,\y) 
			node[anchor=east] {\y}; 
			\filldraw [black] (0,0) circle (2pt);
			\foreach \x in {0,3,6}
			\filldraw [black] (\x,0) circle (2pt);
			\foreach \x in {0,1,3,4,6}
			\filldraw [black] (\x,1) circle (2pt);
			\foreach \x in {0,...,6}
			\foreach \y in {2,...,6}
			\filldraw [black] (\x,\y) circle (2pt);
		\end{tikzpicture}
		\caption*{$Q$}
	\end{figure}

	Then $\gr{Q}=\bbZ^2$, $M_\bbR=\bbR^2,$ and $\sigma=\mathrm{span}_{\bbR_{\ge0}}\{v_1,v_2,v_3\}$ is the first quadrant of $\bbR^2$. Let $\sigma_1$ be the $x$-axis, which is a facet of $\sigma$. Let $\cH=\sigma_1\cap Q$. Note that $\gr{\sigma_1\cap M}$ is the set of integer-valued points on the $x$-axis, but $\gr{\sigma_1\cap Q}$ is the set of integer-valued multiples of three on the $x$-axis. Hence
	\begin{align*}
		\frac{\gr{\sigma_1\cap M}}{\gr{\sigma_1\cap Q}}\cong\bbZ/3\bbZ
	\end{align*}
	and so $N=3$ for this choice of $Q$ and $\cH$.
\end{example}

\begin{proof}[Proof of Lemma \ref{thing55}]
	Let $\overline{v}$ denote the image of $v$ in $\frac{\gr{\sigma_\cH\cap M}}{\gr{\sigma_\cH\cap Q}}$. Since $p^ev\in\gr{\sigma_\cH\cap Q}$, $p^e\in\Ann_\bbZ(\overline{v})$. But also 
	\begin{align*}
		N\in N\bbZ=\Ann_\bbZ\left(\frac{\gr{\sigma_\cH\cap M}}{\gr{\sigma_\cH\cap Q}}\right)\subset\Ann_\bbZ(\overline{v}).
	\end{align*}
	Hence $p^e,N\in\Ann_\bbZ(\overline{v})$. But since $p>N$ is prime, $\gcd(p^e,N)=1$. Thus $\Ann_\bbZ\overline{v}=\bbZ$, i.e., $\overline{v}=0$. Hence $v\in\gr{\sigma_\cH\cap Q}$ as desired.
\end{proof}

\section{Main Result}\label{mainthm}

In this section we prove the main result, Theorem \ref{thing0}. Throughout we use notation as in Notation \ref{thing101}. First we must define a certain invariant.

\begin{definition}\label{thing56}
	For $Q$ an affine pointed semigroup and $\ell\in\bbN$, we define $\nq{\ell}\in\bbZ$ to be the maximum annihilator of any of the groups 
	\begin{align*}
		\frac{\gr{\sigma_\cH\cap M}}{\gr{\sigma_\cH\cap Q}}
	\end{align*}
	where $\cH$ is a face of $Q$ with $\dim\cH\ge\ell-1$, or zero if all these groups are trivial.
\end{definition}

\begin{remark}\label{thing16}
	Recall from Section \ref{prelimresul} that for a face $\cH$ of $Q$ the number $m_\cH$ is calculated as follows. Write $\sigma_{\cH}$ as an intersection of half-spaces:
	\begin{align*}
		\sigma_{\cH} = \bigcap_{i=1}^{r_\cH}\{v\in\gr{\cH}\otimes_{\bbZ}\bbR\,|\,u_i^{\cH}(v)\ge0\}
	\end{align*}
	and let $\gamma_{\cH}\in\cH$ be such that $\gamma_{\cH}+\cH_{\mathrm{sat}}\subset\cH$. Then $m_\cH=\max_{1\le i\le r_\cH}\{u_i^\cH(\gamma_\cH)\}$.
\end{remark}

\noindent We will prove the following refinement of Theorem \ref{thing0}. 

\begin{theorem}\label{thing00}
	Let $Q$ be an affine pointed semigroup of dimesion $n$. Fix $\ell\in\bbN$, and let $\nq{\ell}$ be as in Definition \ref{thing56}. Let $k$ be any perfect field of any characteristic $p>\nq{\ell}$. Let $\fm$ be the maximal monomial ideal of $k[Q]$ and let $m_\cH$ be as in definition \ref{thing102}. Then the Hartshorne-Speiser-Lyubeznik number of $H^\ell_\fm(k[Q])$ is less than or equal to $\max_{\cH}\lceil\log_p m_{\cH}\rceil$, where the maximum ranges over faces $\cH$ of $Q$ with $\dim\cH\ge\ell$.
\end{theorem}

\begin{remark}\label{thing17}
	We have $\nq{1}\ge\cdots\ge\nq{n}$. Hence Theorem \ref{thing0} follows from Theorem \ref{thing00} by taking $N_Q$ to be $\nq{1}$.
\end{remark}

Before we prove Theorem \ref{thing00}, we present an example and several necessary intermediate results. The proof of Theorem \ref{thing00} starts on \cpageref{thing18}.

\begin{example}\label{thing15}
	Consider the semigroup $Q$ from Example \ref{thing14}. Here we calculate $\nq{\ell}$ and the upper bound from Theorem \ref{thing00} for $\hsl{}{H^\ell_\fm(k[Q])}$ for $\ell=1,2$.
	
	First we calculate $\frac{\gr{\sigma_\cH\cap M}}{\gr{\sigma_\cH\cap Q}}$ for each face $\cH$ of $Q$. The cone $\sigma$ has four faces: one 0-face $\sigma_0$, which is the origin, two 1-faces $\sigma_1$ and $\sigma_2$, which are the $x$- and $y$-axes respectively, and one 2-face, which is $\sigma$. Let $\cH_i=\sigma_i\cap Q$ for $i=0,1,2$. We have
	\begin{align*}
		\frac{\gr{\sigma_0\cap M}}{\gr{\sigma_0\cap Q}}=0/0=0, \qquad& \frac{\gr{\sigma_1\cap M}}{\gr{\sigma_1\cap M}} = \bbZ/3\bbZ\\
		\frac{\gr{\sigma_2\cap M}}{\gr{\sigma_2\cap Q}} = \bbZ/\bbZ=0,\qquad& \frac{\gr{\sigma\cap M}}{\gr{\sigma\cap Q}} = \bbZ^2/\bbZ^2=0
	\end{align*}
	It follows that $\nq{1}=\nq{2}=3$.
	
	Now we calculate $m_{\cH_i}$ for $i=1,2$. For ease of notation, let $m_i=m_{\cH_i}$ and $\gamma_i=\gamma_{\cH_i}$ for $i=1,2$. Since all the $\cH_i$ are saturated, we may take $\gamma_i=[0]$ for all $i$. Hence $m_i=0$ for all $i$ as well.
	
	Now we calculate $m_Q$. Let 
	\begin{align*}
		u_1 &= \begin{bmatrix}
			1\\
			0
		\end{bmatrix},\qquad u_2=\begin{bmatrix}
			0\\
			1
		\end{bmatrix}
	\end{align*}
	so that $\sigma_i=\{u\in\sigma\,|\,u_i(v)=0\}$ for $i=1,2$. Further, let $\gamma_Q=\begin{bmatrix}
		2\\
		0
	\end{bmatrix}$ so that $\gamma_Q+Q_{\mathrm{sat}}\subset Q$. Then
	\begin{align*}
		u_1(\gamma_Q)&=\begin{bmatrix}
			1\\
			0
		\end{bmatrix}\cdot\begin{bmatrix}
			2\\
			0
		\end{bmatrix}=2\\
		u_2(\gamma_Q) &= \begin{bmatrix}
			0\\
			1
		\end{bmatrix}\cdot \begin{bmatrix}
			2\\
			0
		\end{bmatrix} = 0
	\end{align*}
	and so $m_Q=\max\{0,2\}=2$.
	
	Now we can calculate the upper bound given in Theorem 4.1 for $\hsl{}{H^\ell_\fm(k[Q])}$ for $\ell=1,2$. From our above calculations, for $p>3$ we have
	\begin{align*}
		\hsl{}{H^2_\fm(k[Q])}&\le\max_{\dim\cH\ge2}\lceil\log_p m_\cH\rceil = \lceil\log_p2\rceil=1,\\
		\hsl{}{H^1_\fm(k[Q])}&\le\max_{\dim\cH\ge1}\lceil\log_p m_\cH\rceil = \max\{-\infty,\ceil{\log_p2}\} = \lceil\log_p2\rceil=1.
	\end{align*}
	Hence for $p>3$ and $\ell=1,2$ the submodule $0^F_{H^\ell_\fm(k[Q])}$ is just the kernel of the Frobenius action. 
\end{example}

We now turn to the proof of Theorem \ref{thing00}. It turns out that the proof will come down to the following fact about semigroups:

\begin{theorem}\label{thing01}
	Let $Q$ be an affine pointed semigroup of dimension $n$. Fix $\ell\in\bbN$ and let $\nq{\ell}$ be as in Definition \ref{thing56}. Fix a prime $p>\nq{\ell}$, let $\cF$ be an $(\ell-1)$-face of $Q$ and let $v\in \gr{Q},e\in\bbN$ be such that $p^ev\in Q-\cF$. Then $p^{\widetilde{e}}v\in Q-\cF$ for $\widetilde{e}\ge\max_{\cH}\lceil\log_p m_{\cH}\rceil$, where the maximum ranges over faces $\cH$ of $Q$ with $\dim\cH\ge\ell$ and where $m_\cH$ is as in Definition \ref{thing102}.
\end{theorem}

Before proving Theorem \ref{thing01} we first prove two additional lemmas. The proof of Theorem \ref{thing01} starts on \cpageref{thing19}.

\begin{lemma}\label{thing45}
	Fix $\ell\in\bbN$. Let $\cF$ be an $(\ell-1)$-face of $Q$ and let $v\in M,e\in\bbN$ be such that $p^ev\in Q-\cF$. Write $p^ev+w\in Q$ for some $w\in\cF$. Then $v+w\in\sigma$. 
\end{lemma}

\begin{proof}
	Fix $1\le j\le r$. We wish to show that $u_j(v+w)\ge0$. First suppose $u_j(v)\ge0$. Then
	\begin{align*}
		u_j(v+w)=u_j(v)+u_j(w)\ge 0
	\end{align*}
	since $w\in\cF\subset Q\subset\sigma$. Now suppose $u_j(v)<0$. Then
	\begin{align*}
		u_j(v+w)=u_j(v)+u_j(w)\ge p^eu_j(v)+u_j(w) = u_j(p^ev+w)\ge0
	\end{align*}
	since $p^ev+w\in Q\subset Q$. In either case we get $v+w\in\sigma$, as desired.
\end{proof}

\begin{lemma}\label{thing46}
	Fix $\ell\in\bbN$. Let $\cF$ be an $(\ell-1)$-face of $Q$ and let $v\in M,e\in\bbN$ be such that $p^ev\in Q-\cF$. Then there is a face $\cH$ of $Q$ and $w\in\cF$ such that $\cF\subset\cH,p^ev+w\in Q$ and $v+w\in\sigma_{\cH}^{\mathrm{int}}$.
\end{lemma}

\begin{proof}
	To prove Lemma \ref{thing46} it suffices to show the following statement:
	\begin{quotation}
		\noindent Let $\cF$ be an $(\ell-1)$-face of $Q$ and let $v\in M,e\in\bbN$ be such that $p^ev\in Q-\cF$. Let $w\in\cF$ be such that $p^ev+w\in Q$. Then there is a face $\widetilde{\cH}$ of $Q$ and $\widetilde{w}\in\cF$ such that $\cF\subset\widetilde{\cH}$ and $v+w+\widetilde{w}\in\sigma_{\widetilde{\cH}}^{\mathrm{int}}$.
	\end{quotation}
	To prove this, first recall that $v+w\in\sigma$ by Lemma \ref{thing45}. We proceed by induction on the number, $d$, of facets of $\sigma$ containing $v+w$.
	
	For the base case, suppose $d=0$. Then $v+w$ is not contained in any facets of $\sigma$, i.e., $v+w\in\sigma^{\mathrm{int}}=\sigma_Q^{\mathrm{int}}$. Since $\cF\subset Q$ we can set $\widetilde{\cH}=Q$ and $\widetilde{w}=0\in\cF$ and we're done.
	
	Now suppose the claim holds for natural numbers strictly less than $d$. Suppose $v+w$ is contained in $d$ facets of $\sigma$. Let $\cH$ be the minimal face of $Q$ such that $v+w\in\sigma_\cH$, i.e.,
	\begin{align*}
		\sigma_\cH=\bigcap_{v+w\in\sigma_j}\sigma_j,\qquad \cH=\sigma_\cH\cap Q.
	\end{align*}
	If $\cF\subset\cH$ we're done, since $v+w\in\sigma_{\cH}^{\mathrm{int}}$ by minimality of $\cH$. So suppose $\cF\not\subset\cH$. We claim there is $w_1\in\cF$ such that $v+w+w_1$ is contained in strictly fewer facets than $v+w$. We can then apply the induction hypothesis.
	
	Indeed, since $\cF\not\subset\cH$, there is $w_1\in\cF$ such that $w_1\not\in\cH$. Further,
	\begin{align*}
		\cH=\sigma_\cH\cap Q=\left(\bigcap_{v+w\in\sigma_j}\sigma_j\right)\cap Q
	\end{align*}
	so there is a facet $\sigma_{j_1}$ such that $v+w\in\sigma_{j_1}$ and $w_1\not\in\sigma_{j_1}$. We claim that $v+w+w_1$ is contained in strictly fewer facets than $v+w$. Indeed, recall that for an arbitrary cone $\tau$, a face $\widetilde{\tau}$ of $\tau$, and $a,b\in\tau$, we have $a+b\in\widetilde{\tau}$ if and only if $a\in\widetilde{\tau}$ and $b\in\widetilde{\tau}$. Hence $v+w+w_1$ is not contained in \textit{more} facets than $v+w$, since any facet not containing $v+w$ cannot contain $v+w+w_1$. Similarly, since $w_1\not\in\sigma_{j_1}$ we have $v+w+w_1\not\in\sigma_{j_1}$. Hence $v+w+w_1$ is contained in strictly fewer facets than $v+w$, as desired. 
	
	By the induction hypothesis there is a face $\widetilde{\cH}_1$ of $Q$ and $\widetilde{w}_1\in\cF$ such that $\cF\subset \widetilde{\cH}_1$ and $v+w+w_1+\widetilde{w}_1\in\sigma_{\widetilde{\cH}_1}^{\mathrm{int}}$. Taking $\widetilde{w}=w_1+\widetilde{w}_1$ and $\widetilde{\cH}=\widetilde{\cH}_1$ we get the desired result.
\end{proof}

\noindent We now prove Theorem \ref{thing01}.

\begin{proof}[Proof of Theorem \ref{thing01}]\label{thing19}
	Fix $\ell\in\bbN$. Let $\nq{\ell}$ be as in Definition \ref{thing56} and let
	\begin{align*}
		e_0:=\max_{\dim\cH\ge\ell}\lceil\log_p m_{\cH}\rceil.
	\end{align*}
	 Fix $p>\nq{\ell}$. Let $\cF$ be an $(\ell-1)$-face of $Q$, and let $v\in M,e\in\bbN$ be such that $p^ev\in Q-\cF$. We wish to show that $p^{\widetilde{e}}v\in Q-\cF$ for $\widetilde{e}\ge e_0$.
	 
	To this end, let $w\in\cF$ be such that $p^ev+w\in Q$. By Lemma \ref{thing56}, we may assume there is a face $\cH$ of $Q$ such that $\cF\subset\cH$ and $v+w\in\sigma_{\cH}^{\mathrm{int}}$. We claim that $v\in\gr{\cH}$, and we will show this using Lemma \ref{thing55}.
	
	To apply Lemma \ref{thing55}, we must show that $p^ev\in\gr{\sigma_\cH\cap Q}$ and $v\in\gr{\sigma_\cH\cap M}$. We certainly have $v\in\gr{\sigma_\cH\cap M}$, since $v+w\in\sigma_\cH\cap M$ and $w\in\cF\subset\cH\subset\sigma_\cH\cap M$. So let's show $p^ev\in\gr{\sigma_\cH\cap Q}$.
	
	First, since $v\in\gr{\sigma_\cH\cap M}$ we also have $p^ev\in\gr{\sigma_\cH\cap M}$. But $p^ev+w\in Q$, so $p^ev+w\in\gr{\sigma_\cH\cap M}\cap Q=(\sigma_\cH\cap M)\cap Q=\sigma_{\cH}\cap Q$. Since $w\in\sigma_\cH\cap Q$, we get $p^ev\in\gr{\sigma_\cH\cap Q}$ as desired.
	
	We can now apply Lemma \ref{thing55} to get that $v\in\gr{\sigma_\cH\cap Q}=\gr{\cH}$.
	
	To prove the theorem, we now have two cases to consider. First suppose $\cF=\cH$. Then $v\in\gr{\cF}\subset Q-\cF$, which immediately gives the desired result. On the other hand, suppose $\cF\subsetneq\cH$. Then $v+w\in\sigma_\cH^{\mathrm{int}}\cap\gr{\cH}$, so $m(v+w)\in\cH$ for $m\ge m_\cH$ by Lemma \ref{thing1}. Since $mw\in\cF$ we get $mv\in Q-\cF$ for $m\ge m_\cH$. Now let $\widetilde{e}\ge e_0$. Note that $\dim\cH\ge\ell$ because $\cF\subsetneq\cH$. Hence $\widetilde{e}\ge\lceil\log_pm_\cH\rceil$, and so $p^{\widetilde{e}}\ge m_\cH$. Thus $p^{\widetilde{e}}v\in Q-\cF$ as desired.
\end{proof}

\noindent Finally, we can prove Theorem \ref{thing00}.

\begin{proof}[Proof of Theorem \ref{thing00}]\label{thing18}
	Fix $\ell\in\bbN$. If $\ell=0$ or $\ell>n$ then $H^\ell_\fm(k[Q])=0$ so the claim holds trivially. Thus we may assume $1\le\ell\le n$. Let
	\begin{align*}
		e_0:=\max_{\dim\cH\ge\ell}\lceil\log_p m_{\cH}\rceil.
	\end{align*}
	We wish to show that $\hsl{}{H^\ell_\fm(k[Q])}\le e_0$. 
	
	First recall that $H^\ell_\fm(k[Q])=\ker(\delta^{\ell})/\im(\delta^{\ell-1})$ where $\delta$ is the differential in the Ishida complex:
	\begin{align}
		0 \to k[Q]\to\bigoplus_{\text{rays }\cF}k[Q]_\cF\to\cdots\to\bigoplus_{i\text{-faces }\cF}k[Q]_\cF\xrightarrow{\delta^i}\cdots\to\bigoplus_{\text{facets }\cF}k[Q]_\cF\to k[M]\to0. \label{thing7}
	\end{align}
	Hence every cohomology class $[\alpha]\in H^\ell_\fm(k[Q])$ is represented by some $\alpha\in\bigoplus_{\ell\text{-faces }\cF}k[Q]_\cF$. The natural Frobenius action on the complex \eqref{thing7}, given by 
	\begin{align*}
		\bigoplus_{\ell\text{-faces }\cF}k[Q]_\cF&\to\bigoplus_{\ell\text{-faces }\cF}k[Q]_\cF\\
		\bigoplus_{\ell\text{-faces }\cF}\frac{f_\cF}{x^{w_\cF}}&\mapsto \bigoplus_{\ell\text{-faces }\cF}\frac{f_\cF^p}{x^{pw_\cF}},
	\end{align*}
	induces the Frobenius action on local cohomology. For an element $\alpha\in\bigoplus_{\ell\text{-faces }\cF}k[Q]_\cF$ we denote the images of $\alpha$ and $[\alpha]$ under these Frobenius actions by $\alpha^p$ and $[\alpha]^p$, respectively.
	
	The natural $Q$ (and therefore $M$) grading on $k[Q]$ induces an $M$-grading on all of the objects in the Ishida complex; furthermore the maps in the Ishida complex respect this grading.
	
	We must show that for a local cohomology class $[\alpha]\in H^\ell_\fm(k[Q])$, if $[\alpha]\in 0^F_{H^\ell_\fm(k[Q])}$ then $[\alpha]^{p^{\widetilde{e}}}=0$ for $\widetilde{e}\ge e_0$. Considering the $M$-grading on the Ishida complex, we may assume that $\alpha$ is homogeneous, i.e., $$\alpha=\bigoplus_{\ell\text{-faces }\cF}a_\cF x^v$$ for some $v\in M$ and $a_\cF\in k$.
	
	Since $[\alpha]^{p^e}=0$ we have $\alpha^{p^e}\in\im\delta^{\ell-1}$, i.e., $\alpha^{p^e}=\delta^{\ell-1}(\beta)$ for some $\beta\in\bigoplus_{\substack{(\ell-1)\\\text{faces }\cG}}k[Q]_\cG$. Write $$\beta=\bigoplus_{\substack{(\ell-1)\\\text{faces }\cG}}b_{\cG}x^{p^ev}$$ for some $b_{\cG}\in k$.
	
	Fix $\widetilde{e}\ge e_0$. If $\widetilde{e}\ge e$ then 
	\begin{align*}
		\alpha^{p^{\widetilde{e}}}=\left(\alpha^{p^e}\right)^{p^{\widetilde{e}-e}} = \left(\delta^{\ell-1}(\beta)\right)^{p^{\widetilde{e}-e}} = \delta^{\ell-1}(\beta^{p^{\widetilde{e}-e}}).
	\end{align*} 
	Hence $\alpha^{p^{\widetilde{e}}}\in\im\delta^{\ell-1}$, i.e., $[\alpha]^{p^{\widetilde{e}}}=0$.
	
	On the other hand, suppose $\widetilde{e}<e$. Let $\widetilde{\beta}=\bigoplus_{\substack{(\ell-1)\\\text{faces }\cG}}b_\cG^{p^{\widetilde{e}-e}}x^{p^{\widetilde{e}}v}$. We claim that $\widetilde{\beta}\in\bigoplus_{\substack{(\ell-1)\\\text{faces }\cG}}k[Q]_\cG$ and that $\alpha^{p^{\widetilde{e}}} = \delta^{\ell-1}(\widetilde{\beta})$.
	
	First we show that $\widetilde{\beta}\in\bigoplus_{\substack{(\ell-1)\\\text{faces }\cG}}k[Q]_\cG$. Since $k$ contains all $p^{e-\widetilde{e}}$ roots, it suffices to show that $p^{\widetilde{e}}v\in Q-\cG$ for all $(\ell-1)$-faces $\cG$ with $b_\cG\not=0$. Indeed, since $\beta\in\bigoplus_{\substack{(\ell-1)\\\text{faces }\cG}}k[Q]_\cG$ we have $p^ev\in Q-\cG$ for all $(\ell-1)$-faces $\cG$ such that $b_\cG\not=0$. Since $p>\nq{\ell}$, by Theorem \ref{thing01} we then have $p^{\widetilde{e}}v\in Q-\cG$ for all such $(\ell-1)$-faces $\cG$, as desired. Hence $\widetilde{\beta}\in\bigoplus_{\substack{(\ell-1)\\\text{faces }\cG}}k[Q]_\cG$.
	
	Now let's show that $\alpha^{p^{\widetilde{e}}} = \delta^{\ell-1}(\widetilde{\beta})$. Indeed,
	\begin{align*}
		\left(\delta^{\ell-1}(\widetilde{\beta})\right)^{p^e} &= \left(\delta^{\ell-1}\left(\bigoplus_{\substack{(\ell-1)\\\text{faces }\cG}}b_\cG^{p^{\widetilde{e}-e}}x^{p^{\widetilde{e}}v}\right)\right)^{p^e}\\
		&= \delta^{\ell-1}\left(\bigoplus_{\substack{(\ell-1)\\\text{faces }\cG}}b_\cG^{p^{\widetilde{e}}}x^{p^{\widetilde{e}+e}v}\right)\\
		&= \left(\delta^{\ell-1}\left(\bigoplus_{\substack{(\ell-1)\\\text{faces }\cG}}b_\cG x^{p^ev}\right)\right)^{p^{\widetilde{e}}}\\
		&= \left(\alpha^{p^e}\right)^{p^{\widetilde{e}}}\\
		&= \left(\alpha^{p^{\widetilde{e}}}\right)^{p^e}.
	\end{align*}
	Since Frobenius acts injectively on $\bigoplus_{\ell\text{-faces }\cF}k[Q]_\cF$ we have $\alpha^{p^{\widetilde{e}}}=\delta^{\ell-1}(\widetilde{\beta})$ as desired. Hence $[\alpha]^{p^{\widetilde{e}}}=0$ and the proof of the theorem is complete.
\end{proof}

\section{Bounds for Frobenius Test Exponents}\label{fte}

One area of interest is the connection between HSL numbers of local cohomology modules and Frobenius test exponents. In this section we see how the results of this paper can be used to find bounds on Frobenius test exponents in some cases.

For a Noetherian ring $R$ of characteristic $p>0$ and an ideal $I\subset R$, the \textit{Frobenius closure} of $I$ is 
\begin{align*}
	I^F &= \{r\in R\,|\,r^{p^e}\in I^{[p^e]}\text{ for some }e\in\bbN\},
\end{align*}
where $I^{[p^e]}$ denotes the ideal generated by all $\left(p^e\right)^{th}$ powers of elements in $I$. The \textit{Frobenius test exponent} of $I$, denoted $\fte{I}$, is the minimal $e\in\bbN$ such that $\left(I^F\right)^{[p^e]}=I^{[p^e]}$. Since $R$ is Noetherian, each ideal $I\subset R$ has a finite Frobenius test exponent. However, it may not be the case that there is a single $e\in\bbN$ that bounds $\fte{I}$ for \textit{all} ideals $I\subset R$. The question of the existence of such an $e$ was posed by Katzman-Sharp in \cite{KS06}, and a counterexample was found by Brenner \cite{brenner} for a two-dimensional domain which is standard graded over a field. However, the behavior is slightly nicer if we restrict to the set of parameter ideals of $R$. We thus define the \textit{Frobenius test exponent} of $R$ as
\begin{align}
	\fte{R} &= \sup\{\fte{\fq}\,|\,\fq\subset R\text{ a parameter ideal}\}\in\bbN\cup\{\infty\}. \label{thing10}
\end{align}
It has been shown that $\fte{R}<\infty$ for Cohen-Macaulay \cite{KS06}, generalized Cohen-Macaulay \cite{hksy06}, weakly $F$-nilpotent \cite{Quy19}, and generalized weakly $F$-nilpotent \cite{mad19} rings.

In several cases, bounds on the Frobenius test exponent of a local ring $(R,\fm)$ are given in terms of HSL numbers of the local cohomology of $R$ at $\fm$ (see \cite[Thm 2.4]{KS06},\cite[Main Thm]{Quy19},\cite[Thm 3.6]{mad19}). Theorem \ref{thing0} together with these known bounds may be used to give an explicit upper bound for $\fte{R}$ in the case that $R$ is any weakly $F$-nilpotent or Cohen-Macaulay affine semigroup ring. 

In particular, Katzman-Sharp \cite[Thm 2.4]{KS06} showed that if a local ring $(S,\fn)$ is Cohen-Macaulay then $\fte{S}=\hsl{}{H^{\dim S}_\fn(S)}$. Hence Theorem \ref{thing0} has the following corollary:
\begin{corollary}\label{thing8}
	With setup as in Theorem \ref{thing0}, if $k[Q]$ is Cohen-Macaulay then 
	\begin{align*}
		\fte{k[Q]}\le\max_{\cH}\lceil\log_p m_{\cH}\rceil,
	\end{align*}
	where the maximum ranges over all faces $\cH$ of $Q$.
\end{corollary} 

Furthermore, Quy \cite[Main Thm]{Quy19} showed that if a local ring $(S,\fn)$ of dimension $n$ is weakly $F$-nilpotent then 
\begin{align*}
	\fte{S}\le\sum_{j=0}^n\binom{n}{j}\hsl{}{H^j_\fn(S)}.
\end{align*}
Hence Theorem \ref{thing0} has the following corollary as well:

\begin{corollary}\label{thing11}
	With setup as in Theorem \ref{thing0}, if $k[Q]$ is weakly $F$-nilpotent then 
	\begin{align*}
		\fte{k[Q]}\le \sum_{j=0}^n\binom{n}{j}\max_{\dim\cH\ge j}\lceil\log_p m_{\cH}\rceil,
	\end{align*}
	where in the $j^{th}$ term in the sum the maximum is over all faces $\cH$ of $Q$ with $\dim\cH\ge j$.
\end{corollary}

\begin{example}
	Consider the semigroup $Q$ from Example \ref{thing14}. The semigroup ring $k[Q]$ is Cohen-Macaulay, so by Corollary \ref{thing8} and Example \ref{thing15} for $p>3$ the Frobenius test exponent (see Section \ref{intro}) satisfies
	\begin{align*}
		\fte{k[Q]}\le \max_\cH\ceil{\log_pm_\cH} = \max\{-\infty,\ceil{\log_p2}\} = \ceil{\log_p2}=1.
	\end{align*}
	Hence for any parameter ideal $\fq\subset k[Q]$, $\fq^F = \{r\in k[Q]\,|\, r^p\in \fq^{[p]}\}$ (note this includes the possibility that $\fq^F=\fq$). In the case that $\fte{k[Q]}=0$ we say that $k[Q]$ is \textit{parameter $F$-closed} \cite[Def 6.8]{qs17}. Hence the Hartshorne-Speiser-Lyubeznik number of $k[Q]$	is also zero \cite[Thm 3.4]{hq19}, i.e., $k[Q]$ is $F$-injective.
\end{example}

\begin{center}
	\textsc{acknowledgements}
\end{center}

The author would like to thank Kyle Maddox and Lance Miller for helpful discussion about this paper, and Karen Smith for her guidance. Thanks also to the University of Minnesota for allowing me to participate in the 2023 Macaulay2 Workshop \& Mini-School, and to Moty Katzman for his guidance during this workshop, which inspired this research. Also thanks to Olivia Strahan, Ben Baily, Anna Brosowsky, and Suchitra Pande for many helpful discussions. This work was conducted as part of my PhD thesis at the University of Michigan.

\bibliographystyle{alpha} 
\bibliography{refs} 

\end{document}